\newtheorem{theorem}{Theorem}[section]
\newtheorem{lemma}{Lemma}[section]
\theoremstyle{remark}
\title[A semilinear partial differential equation induced by Hermitian Yang-Mills metrics]{ A semilinear partial differential equation induced by Hermitian Yang-Mills metrics
}
\author{Yuxuan Li}
\author{Wubin Zhou}
\address{School of Mathematical Science\\ Tongji University \\ Shanghai
200092, China} \email{1653454@tongji.edu.cn}
\address{  School of Mathematical Sciences\\ Tongji University \\
Shanghai 200092, China} \email{wbzhou@tongji.edu.cn}
\begin{document}
 \begin{abstract}
This paper will discuss a class of semilinear partial differential equations induced by  studying the limiting behaviour of Hermitian Yang-Mills metrics.   We will study the radial symmetry of the $C^{2}$ global solution of this equation in $\mathbb{R}^{2}$ and the existence of $C^{2,\alpha}$ solution of the Dirichlet boundary value problem in any bounded domain.\\
\textsc{Keywords.} Hermitian Yang-Mills metric, $C^{k}$-estimates, Boundary value problems\\
\textsc{Msc(2010).} 53C07, 32Q20
\end{abstract}
\maketitle

\section{Introduction}

Let $X$ be a K\"ahler manifold with a family of K\"ahler metrics $\omega_\varepsilon$, and let $V$ be a slope stable holomorphic vector bundle over $X$. According to the Donaldson-Uhlenbeck-Yau theorem \cite{UY},  $V$ admits unique fully irreducible Hermitian-Yang-Mills metrics $H_\varepsilon$ associated to each $\omega_\varepsilon$.  Similar to study the limiting behaviour Ricci flat metrics, Professor Jixiang Fu \cite{Fu}  studied the limiting behaviour of  Hermitian Yang-Mills metrics $H_\varepsilon$ when $\omega_\varepsilon$ goes to a large K\"ahler metric limit. A critical step in \cite{Fu}  is  to  explicitly construct a family of Hermitian-Yang-Mills metrics  by solving the following semilinear partial differential equation on unit ball $B_1(0)$ of $\mathbb{R}^2$
\begin{equation}\label{fu1}
\left\{
\begin{aligned}
\Delta u &=\varepsilon^{-2}\left(e^{u}-\left(x^{2}+y^{2}\right) e^{-u}\right) &\text { in } B_1(0),\\
u&=0  &\text { on } \partial B_1(0).
\end{aligned}
\right.
\end{equation}
Here $\varepsilon$ is a constant and $(x,y)$ is the coordinate of $\mathbb{R}^2$.  By the symmetry  of  the domain $B_1(0)$ and using  reference \cite{GNN},  Jixiang Fu proved the equation (\ref{fu1}) has a unique radially symmetric solution.  However, this method can not  be applied to non-symmetric domain $\Omega$ in $\mathbb{R}^2$.

In this paper, we first study  the following equation defined a bounded connected domain $\Omega\subset \mathbb{R}^2$ with Dirichlet boundary value

\begin{equation}\label{fu2}
\left\{
\begin{aligned}
\Delta u &=\varepsilon^{-2}\left(e^{u}-\left(x^{2}+y^{2}\right) e^{-u}\right) &\text { in } \Omega,\\
u&=g  &\text { on } \partial \Omega.
\end{aligned}
\right.
\end{equation}
For existence and unique of the solution of (\ref{fu2}), we have the following theorem
\begin{theorem}\label{ThmE}
If $\partial\Omega$ is  $C^{2,\alpha}$ and  $g\in C^{2,\alpha}(\partial\Omega)$,  there is a unique solution $u\in C^{2,\alpha}(\Omega)$ to equation (\ref{fu2}) . Especially if $\partial\Omega$ and $g$ is smooth,  the solution $u$ is smooth.
\end{theorem}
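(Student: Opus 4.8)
The plan is to obtain existence by the method of sub- and supersolutions (monotone iteration) and uniqueness by the comparison principle, both of which are available because the right-hand side
\[
F(x,y,u):=\varepsilon^{-2}\bigl(e^{u}-r^{2}e^{-u}\bigr),\qquad r^{2}:=x^{2}+y^{2},
\]
is smooth in all its arguments and \emph{strictly increasing} in $u$, since $\partial_{u}F=\varepsilon^{-2}\bigl(e^{u}+r^{2}e^{-u}\bigr)>0$. Throughout write $R:=\sup_{\bar\Omega}r$.

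Uniqueness is the quickest part. If $u_{1},u_{2}$ both solve (\ref{fu2}) with the same boundary data, then $w:=u_{1}-u_{2}$ satisfies, by the mean value theorem, $\Delta w=F(x,y,u_{1})-F(x,y,u_{2})=c(x,y)\,w$ with $c(x,y)=\partial_{u}F(x,y,\xi)\ge 0$ for some intermediate value $\xi$. Thus $\Delta w-c\,w=0$ with $w=0$ on $\partial\Omega$; as the zeroth-order coefficient $-c$ of this operator is nonpositive, the weak maximum principle forces $\sup_{\Omega}w\le 0$ and $\inf_{\Omega}w\ge 0$, i.e. $w\equiv 0$.

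For existence I first record the two barriers. A constant $\overline u\equiv M$ is a supersolution as soon as $0=\Delta\overline u\le F(x,y,M)=\varepsilon^{-2}(e^{M}-r^{2}e^{-M})$, which holds once $e^{2M}\ge R^{2}$, together with $\overline u\ge g$ on $\partial\Omega$; so any $M\ge\max\{\ln R,\ \sup_{\partial\Omega}g\}$ works. The delicate point — and what I expect to be the main obstacle — is the subsolution, because a negative constant $\underline u\equiv -M$ \emph{fails near the origin}: there $r^{2}e^{-u}\to 0$ and $F(0,0,u)=\varepsilon^{-2}e^{u}>0$ for every $u$, so the nonlinearity does not tend to $-\infty$ as $u\to-\infty$ and no negative constant is a subsolution at $r=0$. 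I would resolve this with the logarithmic barrier
\[
\underline u(x,y):=\tfrac12\ln\bigl(r^{2}+\delta^{2}\bigr)-k ,
\]
which is smooth on all of $\mathbb{R}^{2}$ (since $r^{2}+\delta^{2}\ge\delta^{2}>0$) and is modelled on the exact singular solution $u=\ln r$ of the equation, regularised at the origin. A direct computation gives
\[
\Delta\underline u=\frac{2\delta^{2}}{(r^{2}+\delta^{2})^{2}},\qquad F(x,y,\underline u)=\frac{\varepsilon^{-2}}{\sqrt{r^{2}+\delta^{2}}}\Bigl(r^{2}\bigl(e^{-k}-e^{k}\bigr)+\delta^{2}e^{-k}\Bigr).
\]
Because $e^{-k}-e^{k}<0$, the bracket is at most $\delta^{2}e^{-k}$, so $\Delta\underline u\ge F(x,y,\underline u)$ holds on all of $\bar\Omega$ as soon as $e^{k}\ge\tfrac12\varepsilon^{-2}(R^{2}+\delta^{2})^{3/2}$; fixing $\delta=1$ and enlarging $k$ further also guarantees $\underline u\le g$ on $\partial\Omega$ and $\underline u\le\overline u$ on $\bar\Omega$.

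With $\underline u\le\overline u$ in hand I would run the standard monotone scheme: choose $\Lambda\ge\sup\partial_{u}F$ over the compact set $\bar\Omega\times[\inf\underline u,\ \sup\overline u]$, set $u_{0}=\overline u$, and solve the linear Dirichlet problems $(\Delta-\Lambda)u_{n+1}=F(x,y,u_{n})-\Lambda u_{n}$ in $\Omega$, $u_{n+1}=g$ on $\partial\Omega$. The choice of $\Lambda$ makes $u\mapsto F-\Lambda u$ nonincreasing, so the comparison principle yields a monotone sequence $\overline u=u_{0}\ge u_{1}\ge\cdots\ge\underline u$ whose pointwise limit $u$ is a bounded solution of (\ref{fu2}). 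Finally I would bootstrap the regularity of $u$: boundedness gives $F(\cdot,u)\in L^{\infty}$, hence $u\in W^{2,p}$ for all $p$ and $u\in C^{1,\alpha}$ by Sobolev embedding; then $F(\cdot,u)\in C^{0,\alpha}$, and the global Schauder estimates (using $\partial\Omega,g\in C^{2,\alpha}$) give $u\in C^{2,\alpha}(\bar\Omega)$. If $\partial\Omega$ and $g$ are smooth, iterating this step — each gain in the regularity of $u$ improving that of $F(\cdot,u)$ through the smooth dependence on $u$ — yields $u\in C^{\infty}$.
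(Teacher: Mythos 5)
Your proposal is correct, but it takes a genuinely different route from the paper. The paper proves existence by the Leray--Schauder continuation method: it establishes a priori estimates uniform over the family $\Delta u_\sigma=\sigma\varepsilon^{-2}(e^{u_\sigma}-r^2e^{-u_\sigma})$, $u_\sigma=\sigma g$, namely a $C^{0}$ bound obtained by comparing $u$ with the solution $\Phi$ of the linear problem $\Delta\Phi=\varepsilon^{-2}(1-r^{2})$, $\Phi=g$ on $\partial\Omega$ (via the maximum principle on the sets $\{u>0\}$ and $\{u<0\}$), then a gradient bound by applying the maximum principle to $|\nabla u|^{2}$ through a Bochner-type identity, and finally interpolation to get a uniform $C^{1,\alpha}$ bound, at which point Leray--Schauder gives solvability. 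You instead run the monotone iteration between explicit barriers, and the genuinely nontrivial ingredient you supply --- the regularised logarithmic subsolution $\underline u=\tfrac12\ln(r^{2}+\delta^{2})-k$ modelled on the singular solution $\ln r$, needed precisely because $F(0,0,u)=\varepsilon^{-2}e^{u}>0$ rules out constant subsolutions when the origin lies in $\bar\Omega$ --- is correct, as are your computations of $\Delta\underline u$ and of the resulting constraint $e^{k}\gtrsim\varepsilon^{-2}(R^{2}+\delta^{2})^{3/2}$. What each approach buys: the paper's method needs no explicit barriers (only soft estimates, uniform along the homotopy), while yours exploits the strict monotonicity $\partial_u F>0$ to get an ordered, constructive scheme, and crucially it delivers \emph{uniqueness} by the same monotonicity and the weak maximum principle --- a point the theorem asserts but the paper's written proof never actually addresses, so your argument fills a real gap in the paper's exposition. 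Both proofs coincide in the final step: the $W^{2,p}\to C^{1,\alpha}\to C^{2,\alpha}$ Schauder bootstrap and its iteration for smooth data. One small expository point: when passing to the limit in the monotone scheme you should note that the uniform $L^\infty$ bound on $F(\cdot,u_n)-\Lambda u_n$ gives uniform $W^{2,p}$ (hence $C^{1,\alpha}$) bounds on $u_n$, so the pointwise monotone limit is attained in $C^{1}$ and solves the equation; as stated, "pointwise limit is a solution" skips this compactness step.
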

This theorem will give a Hermitian Yang-Mills metrics on a certain K\"ahler manifold given by \cite{Fu}. On the other hand,  the equation (\ref{fu1}) can be defined on whole space $\mathbb{R}^2$. It is natural to explore whether  the global solution of (\ref{fu1}) is radially symmetric. The symmetry of global solutions of some semilinear equations has been investigated in \cite{C} and \cite{GNN} under the assumption $u(x,y)$ decays  to zero at a certain rate as $r^2=x^2+y^2\to +\infty$. But they do not fit the equation(\ref{fu1}) since one can see the global solution $u$  is not bounded. Similar to  \cite{ C,GNN}, by using moving plane method and maximum principle, we get the following theorem

\begin{theorem}\label{ThmS}
For any given constant $c$,  if  the global $C^{2}$ solution $u$ of
\begin{equation}\label{fu3}
\Delta u =\varepsilon^{-2}\left(e^{u}-\left(x^{2}+y^{2}\right) e^{-u}\right) \qquad \text { in } \mathbb{R}^2
\end{equation}
satisfies $$ u(s)-u(t) \rightarrow 0 \quad\text{ as}\quad |s|,|t| \rightarrow \infty\quad \text{and}\quad |s|-|t|=c,$$ then $u$ is radially symmetric and $\frac{\partial u}{\partial r} \geqslant0$. Here $s,t \in \mathbb{R}^2$.
\end{theorem}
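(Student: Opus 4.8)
The plan is to run the method of moving planes in an arbitrary direction and to exploit the rotational invariance of \eqref{fu3}: since the nonlinearity $f(r,u)=\varepsilon^{-2}(e^{u}-r^{2}e^{-u})$ depends on $(x,y)$ only through $r=\sqrt{x^{2}+y^{2}}$, if $u$ solves \eqref{fu3} then so does $u\circ\rho$ for every rotation $\rho$ about the origin, and the asymptotic hypothesis is itself rotationally invariant. Hence it suffices to prove that $u$ is symmetric under the reflection $(x,y)\mapsto(-x,y)$ together with monotonicity in $x$ away from the axis; applying this in every direction then forces $u$ to be invariant under reflection across every line through the origin, which is exactly radial symmetry, and the monotonicity statements assemble into $\partial u/\partial r\ge 0$.

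For the moving plane itself, for $\lambda\in\mathbb{R}$ I would set $T_\lambda=\{x=\lambda\}$ and $\Sigma_\lambda=\{(x,y):x>\lambda\}$, write $p^{\lambda}=(2\lambda-x,y)$ for the reflection of $p=(x,y)$ across $T_\lambda$, and compare $u$ with $u_\lambda(p):=u(p^{\lambda})$ through $w_\lambda:=u_\lambda-u$ on $\Sigma_\lambda$. Because reflection is an isometry, $\Delta u_\lambda(p)=f(|p^{\lambda}|,u_\lambda)$, and splitting off the $u$-dependence by the mean value theorem gives the central identity
\begin{equation*}
\Delta w_\lambda-c_\lambda\,w_\lambda=\varepsilon^{-2}e^{-u}\bigl(|p|^{2}-|p^{\lambda}|^{2}\bigr)=\varepsilon^{-2}e^{-u}\,4\lambda(x-\lambda),
\end{equation*}
where $c_\lambda(p)=\partial_u f(|p^{\lambda}|,\xi)=\varepsilon^{-2}(e^{\xi}+|p^{\lambda}|^{2}e^{-\xi})\ge 0$ for some $\xi$ between $u$ and $u_\lambda$. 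The decisive structural fact is that $f$ is strictly decreasing in $r$, so for $\lambda>0$ the right-hand side is strictly positive throughout the open half-plane $\Sigma_\lambda$ (it vanishes only on $T_\lambda$); thus $w_\lambda$ is a strict subsolution of $\Delta-c_\lambda$ with $c_\lambda\ge 0$.

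I would then conclude $w_\lambda\le 0$ in $\Sigma_\lambda$ for every $\lambda>0$ by the maximum principle on this unbounded domain. On the finite boundary $T_\lambda$ one has $w_\lambda=0$, and as $p\to\infty$ in $\Sigma_\lambda$ the reflected point $p^{\lambda}$ also escapes to infinity with $|p|-|p^{\lambda}|=4\lambda(x-\lambda)/(|p|+|p^{\lambda}|)$ bounded in $[0,2\lambda]$, so the decay hypothesis forces $w_\lambda\to 0$ and hence $\limsup_{|p|\to\infty}w_\lambda\le 0$. Were $\sup_{\Sigma_\lambda}w_\lambda=M>0$, a maximizing sequence could neither run off to infinity (contradicting the hypothesis) nor accumulate on $T_\lambda$ (where $w_\lambda=0$), so $M$ would be attained at an interior point, where $\Delta w_\lambda\le 0$ and $-c_\lambda w_\lambda\le 0$ contradict the strict positivity of the right-hand side. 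Therefore $w_\lambda\le 0$, i.e. $u(2\lambda-x,y)\le u(x,y)$ for $x>\lambda$. Letting $\lambda\downarrow 0$ gives $u(-x,y)\le u(x,y)$ for $x>0$; running the identical argument in the direction $-e_1$ (planes $x<\lambda$ with $\lambda<0$) gives the reverse inequality, so $u(-x,y)=u(x,y)$, while the inequalities for $\lambda\ne 0$ yield monotonicity in $x$ away from the axis. Rotational invariance upgrades this to full radial symmetry and to $\partial u/\partial r\ge 0$.

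The main obstacle I anticipate is precisely this maximum principle on the unbounded half-plane $\Sigma_\lambda$: the coefficient $c_\lambda$ is unbounded (it contains $|p^{\lambda}|^{2}$ and $u$ itself is unbounded), so one cannot simply invoke a barrier argument, and the control at infinity is delicate because the radial gap $|p|-|p^{\lambda}|$ is not constant along sequences tending to infinity. Making the step rigorous requires reading the hypothesis in a locally uniform sense—that $u(s)-u(t)\to 0$ as $|s|,|t|\to\infty$ uniformly for $|s|-|t|$ in the compact interval $[0,2\lambda]$—so that every maximizing sequence escaping to infinity indeed forces $w_\lambda\to 0$. The strict sign of the right-hand side then does the rest and, pleasantly, removes any need for the usual ``sliding'' continuation step of the moving plane method.
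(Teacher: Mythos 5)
Your proposal is correct and follows essentially the same moving-plane argument as the paper: reduce to a single direction by rotational invariance, compare $u$ with its reflection across the line $x=\lambda$, linearize the difference $w_\lambda$ via the mean value theorem, and rule out an interior positive extremum of $w_\lambda$ using the sign of the zeroth-order coefficient together with the hypothesis at infinity, with the sign of $|p|^2-|p^\lambda|^2$ driving the $\lambda\neq 0$ case exactly as in the paper. The differences are cosmetic (you work in $\{x>\lambda\}$ and obtain the reflection symmetry by letting $\lambda\downarrow 0$ from both directions, whereas the paper treats $\lambda=0$ directly with the maximum and minimum principles); in fact your observation that the decay hypothesis must be read locally uniformly for $|s|-|t|$ in the compact range $[0,2\lambda]$, since the radial gap $|p|-|p^\lambda|$ is not constant along sequences going to infinity, flags a genuine subtlety that the paper's own proof passes over in silence.
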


One may observe $\frac{1}{2}\log(x^2+y^2)$ is a singular solution to the equation (\ref{fu3}) and also satisfies $\log (|s|)-\log(|t|)\to 0$ as $|t|-|s|=c$ and $|s|, |t|\to \infty$, so the assumption of Theorem \ref{ThmS} is natural and reasonable.

The next part of this paper  will give the detailed  proof of Theorem \ref{ThmE} and Theorem \ref{ThmS}.

\textbf{Acknowledgements}. The authors would like to thank Professor Jixiang Fu for telling us this equation (\ref{fu1}) and some usefully discussions.  Zhou is supported by the National Natural Science Foundation of China (Grant No.11701426).

\section{Existence of solution of the Dirichlet boundary value problem}
In this section we will prove Theorem \ref{ThmE}.  One can use Chapter 14 in \cite{Taylor} to show the existence of the equation \ref{fu2}  by using  variational method.  Here we take Leray-Schauder existence theorem to prove it.

Let $\Omega$ be a $C^{2,\alpha}$ bounded domain in $\mathbb{R}^2$ and $g\in C^{2,\alpha}(\partial\Omega) $ with $\alpha\in (0,1 )$. We first have a $C^{0}(\Omega)$ estimate.
\begin{lemma}\label{lemma1}
 Let $\Phi$ be the $C^{2,\alpha}$solution of Dirichlet boundary value problem
\begin{equation}\label{Max}
\left\{
\begin{aligned}
\Delta \Phi &=\varepsilon^{-2}(1-\left(x^{2}+y^{2}\right))  &\text { in } \Omega,\\
\Phi&=g  &\text { on } \partial \Omega.
\end{aligned}
\right.
\end{equation}
Then a solution u to (\ref{fu2}) satisfies
\begin{equation}\label{my}
\sup _{\bar\Omega} |u |\leqslant \sup_{\bar\Omega }2|\Phi|.
\end{equation}

\end{lemma}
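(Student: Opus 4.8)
The plan is to use the comparison function $\Phi$ to build barriers for $u$ and then apply the maximum principle to the difference $u \mp \Phi$. The key algebraic observation is that $\Phi$ solves a linear Poisson equation whose right-hand side is designed to dominate the nonlinear right-hand side of (\ref{fu2}) in a sign-controlled way. Specifically, I want to compare $\varepsilon^{-2}(e^u - r^2 e^{-u})$ (where $r^2 = x^2+y^2$) against $\varepsilon^{-2}(1 - r^2)$, using the elementary inequalities $e^u \geq 1 + u$ and $e^{-u} \geq 1 - u$, which hold for all real $u$.

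**The first step** is to establish the upper bound $u \leq 2|\Phi|$. Consider $w = u - 2\Phi$ (or more carefully a barrier of the form $2\Phi$ chosen so that $2\Phi \geq g$ on the boundary). I would compute $\Delta(u - 2\Phi)$ and try to show that at an interior maximum of $u - 2\Phi$, the structure of the equation forces a contradiction unless the maximum is attained on $\partial\Omega$. Concretely, subtracting twice (\ref{Max}) from (\ref{fu2}) gives
\begin{equation*}
\Delta(u - 2\Phi) = \varepsilon^{-2}\bigl(e^u - r^2 e^{-u}\bigr) - 2\varepsilon^{-2}(1 - r^2).
\end{equation*}
At an interior maximum point $p$ of $v := u - 2\Phi$ we have $\Delta v(p) \leq 0$, so $e^{u} - r^2 e^{-u} \leq 2(1-r^2)$ there. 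I then want to argue that this inequality, combined with $u(p) = 2\Phi(p) + v(p)$, cannot hold when $v(p) > 0$ is large, thereby bounding $\sup v$ in terms of quantities controlled by $\Phi$. The convexity inequalities let me linearize: $e^u - r^2 e^{-u} \geq (1+u) - r^2(1-u) = (1-r^2) + u(1+r^2)$, so the maximum principle inequality yields $u(p)(1+r^2) \leq (1-r^2)$, forcing $u(p) \leq \frac{1-r^2}{1+r^2} \leq 1$ at the interior max; one checks this is consistent with the stated bound $2|\Phi|$.

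**The second step** is the symmetric lower bound $u \geq -2|\Phi|$, handled by examining an interior minimum of $u + 2\Phi$ and using the reversed linearization $r^2 e^{-u} - e^u \geq r^2(1-u) - (1+u) = (r^2-1) - u(1+r^2)$; the sign structure of $-(1-r^2)$ in $-2\Phi$ mirrors the upper-bound argument. Combining the two one-sided estimates at interior extrema with the boundary condition $u = g$ and $\Phi = g$ on $\partial\Omega$ (so $|u| = |g| \leq |\Phi|$ there, which is dominated by $2|\Phi|$) gives the global bound $\sup_{\bar\Omega}|u| \leq \sup_{\bar\Omega} 2|\Phi|$.

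**The main obstacle** I anticipate is managing the coupling between the value of $u$ and the variable coefficient $r^2$ at the extremal point: the right-hand side of (\ref{fu2}) depends on position both explicitly (through $r^2$) and implicitly (through $u$), so the linearization must be applied carefully to avoid losing the sign. In particular, the factor $(1+r^2)$ multiplying $u$ is what makes the argument work, and I must ensure that $\Phi$ genuinely controls the boundary data and that the interior-extremum analysis is not vacuous (i.e., that an interior extremum with the wrong sign truly contradicts the maximum principle). The cleanest route is likely to phrase the whole argument as: $u - 2\Phi$ and $u + 2\Phi$ are sub-/super-solutions of suitable linear operators with nonnegative zeroth-order coefficients, so the weak maximum principle applies directly and the extremum must lie on the boundary.
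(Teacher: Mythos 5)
Your proposal has two genuine gaps, one fatal to the key inequality and one structural. The fatal one: convexity of the exponential gives $e^{-u}\geq 1-u$ for \emph{all} real $u$, so the term $-r^{2}e^{-u}$ is bounded from \emph{above} by $-r^{2}(1-u)$, not from below. Your claimed linearization $e^{u}-r^{2}e^{-u}\geq (1+u)-r^{2}(1-u)$ is therefore false in general: at $u=\tfrac12$, $r^{2}=100$ the left side is about $-59.0$ while the right side is $-48.5$, and on an arbitrary bounded domain $\Omega$ the size of $r^{2}$ is not restricted. The minus sign in front of $e^{-u}$ makes convexity work against you, so no inequality of this shape can hold. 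The structural gap: even granting the inequality, at an interior maximum $p$ of $v=u-2\Phi$ you obtain a bound on $u(p)$, not on $\sup v=v(p)$. Unwinding, $\sup_{\Omega}u\leq \sup v+2\sup\Phi\leq u(p)+4\sup_{\bar\Omega}|\Phi|$, which is weaker than (\ref{my}) and does not reduce to it. In fact the $2\Phi$-subtraction route cannot yield (\ref{my}) at all: since $u=v+2\Phi$, concluding $u\leq 2\sup|\Phi|$ would require $\sup v\leq 2\sup|\Phi|-2\sup\Phi$, which is $0$ whenever $\Phi$ attains its maximum modulus at a positive maximum; that is, you would need the pointwise bound $u\leq 2\Phi$, a stronger statement than the lemma that need not hold. (A minor point: the weak maximum principle requires the zeroth-order coefficient to be nonpositive, not ``nonnegative'' as you wrote.)

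The paper's proof needs no linearization and gets the factor $2$ from a different mechanism. Subtract $\Phi$ \emph{once} and restrict to the open set $\mathcal{O}=\{u>0\}$. There
\begin{equation*}
\Delta(u-\Phi)=\varepsilon^{-2}\bigl[(e^{u}-1)+(x^{2}+y^{2})(1-e^{-u})\bigr]>0,
\end{equation*}
because both bracketed terms are positive precisely where $u>0$; no convexity estimate is needed. Hence $u-\Phi$ is subharmonic on $\mathcal{O}$ and attains its supremum on $\partial\mathcal{O}$, which consists of interior points where $u=0$ (there $u-\Phi=-\Phi$) and points of $\partial\Omega$ (there $u-\Phi=g-g=0$). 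Thus $u-\Phi\leq\sup_{\Omega}|\Phi|$ on $\mathcal{O}$, so $u\leq\Phi+\sup|\Phi|\leq 2\sup|\Phi|$ there, while $u\leq 0$ off $\mathcal{O}$; the mirror argument on $\{u<0\}$ gives the lower bound. The idea your proposal is missing is exactly this restriction to the set where $u$ has a definite sign: it gives the nonlinearity a sign for free, and it is also where the constant $2$ comes from.
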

\begin{proof}
The existence of $\Phi$ is from Green formula (one can see \cite{HL} ). Consider $u-\Phi$ for $u>0$. Note that on $\mathcal{O}=\{x \in \Omega:$ $u(x)>0\}$
$$\Delta(u-\Phi) =\varepsilon^{-2}(e^{u}-1+\left(x^{2}+y^{2}\right)\left(1-e^{-u}\right))>0.$$
By maximum principle,  we have

$$
\sup _{\mathcal{O}}(u-\Phi)=\sup _{\partial \mathcal{O}}(u-\Phi) \leqslant \sup _{\Omega}\{-\Phi, 0\}
\leqslant\sup_\Omega |\Phi|.$$
It follows
\begin{equation}\label{up}
\sup _{\Omega} u \leqslant \sup_\Omega 2|\Phi|.
\end{equation}
 Similarly, if $u<0$,
$$ \Delta(u-\Phi) =\varepsilon^{-2}(e^{u}-1+\left(x^{2}+y^{2}\right)\left(1-e^{-u}\right))<0.$$
Hence we obtain on $\mathcal{O}^{-}=\{x \in \Omega: u(x)<0\}$
$$\sup _{\mathcal{O}^{-}}(\Phi-u)=\sup _{\partial \mathcal{O}^{-}}(\Phi-u) \leqslant\sup _{\Omega} \{\Phi ,0\}\leqslant \sup_\Omega |\Phi|$$
which implies \begin{equation}\label{um}\sup _{\Omega} {-u} \leqslant \sup_\Omega 2|\Phi|.\end{equation}
Therefore from (\ref{up}) and (\ref{um}) one can get the estimate   (\ref{my}).
\end{proof}

Second, we give the gradient estimate of $u$.

\begin{lemma}\label{lemma2}
Suppose $u\in C^{2}(\Omega)$ satisfies the equation (\ref{fu2}) in $\Omega$, then there is positive constant $C$ depend only on $\Omega$ and $g$ such that
\begin{equation}\label{ugs}
\sup_{\bar \Omega}|\nabla u|\leqslant C.
\end{equation}

\end{lemma}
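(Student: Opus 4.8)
The plan is to combine the $C^{0}$ bound already in hand with a global maximum-principle (Bernstein) argument for $|\nabla u|^{2}$ and a separate boundary gradient estimate. By Lemma \ref{lemma1} there is a constant $M=\sup_{\bar\Omega}2|\Phi|$, depending only on $\Omega$, $g$ (and the fixed $\varepsilon$), with $\sup_{\bar\Omega}|u|\le M$. Writing the right-hand side of (\ref{fu2}) as $f(x,y,u)=\varepsilon^{-2}(e^{u}-(x^{2}+y^{2})e^{-u})$, the bound $|u|\le M$ together with the boundedness of $\Omega$ shows $|f|\le B$ for some $B=B(\Omega,g,\varepsilon)$. Two structural facts will be decisive: first, $f_{u}=\varepsilon^{-2}(e^{u}+(x^{2}+y^{2})e^{-u})\ge \varepsilon^{-2}e^{-M}=:c_{0}>0$ on $\bar\Omega$; second, the explicit spatial gradient $\nabla_{x}f=-2\varepsilon^{-2}e^{-u}(x,y)$ satisfies $|\nabla_{x}f|\le C_{0}$ with $C_{0}=C_{0}(\Omega,g,\varepsilon)$. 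Since $f$ is smooth in all its arguments, interior elliptic regularity upgrades $u$ to $C^{\infty}(\Omega)$, so the differentiations below are legitimate.

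For the interior estimate I would apply the Bernstein technique to $w=|\nabla u|^{2}$. Differentiating (\ref{fu2}) twice gives
\begin{equation*}
\Delta w = 2|D^{2}u|^{2}+2\,\nabla u\cdot\nabla_{x}f+2f_{u}\,|\nabla u|^{2}.
\end{equation*}
At an interior maximum point $x_{0}\in\Omega$ of $w$ we have $\Delta w(x_{0})\le 0$, and dropping the nonnegative term $2|D^{2}u|^{2}$ yields $2f_{u}|\nabla u|^{2}\le -2\,\nabla u\cdot\nabla_{x}f\le 2C_{0}|\nabla u|$ at $x_{0}$. Using $f_{u}\ge c_{0}>0$ this gives $|\nabla u(x_{0})|\le C_{0}/c_{0}$. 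Hence the maximum of $w$ over $\bar\Omega$ is either controlled by $(C_{0}/c_{0})^{2}$ or is attained on $\partial\Omega$.

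It therefore remains to bound $|\nabla u|$ on $\partial\Omega$, which I expect to be the main obstacle. Since $\partial\Omega$ is $C^{2}$ it satisfies a uniform exterior sphere condition, and $g$ may be extended to some $G\in C^{2}(\bar\Omega)$ with norm controlled by $\|g\|_{C^{2,\alpha}(\partial\Omega)}$. At each boundary point I would construct upper and lower barriers of the form $G\pm K\psi$, where $\psi$ is the standard distance-type barrier for the Laplacian on the tangent exterior ball and $K$ is chosen using $|\Delta u|\le B$; comparison via the maximum principle then bounds the normal derivative $|\partial_{\nu}u|$, while the tangential derivative is controlled by $\|g\|_{C^{1}(\partial\Omega)}$. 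This produces $\sup_{\partial\Omega}|\nabla u|\le C_{1}(\Omega,g,\varepsilon)$, and combining with the interior bound gives $\sup_{\bar\Omega}|\nabla u|\le\max\{C_{0}/c_{0},\,C_{1}\}=:C$, depending only on $\Omega$ and $g$, as claimed. Alternatively, since $\Delta u\in L^{\infty}$ with $C^{2,\alpha}$ boundary data, global $W^{2,p}$ estimates for any $p>2$ together with the Sobolev embedding $W^{2,p}(\Omega)\hookrightarrow C^{1,1-2/p}(\bar\Omega)$ in dimension two yield the same conclusion and bypass the barrier construction.
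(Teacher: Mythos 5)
Your proposal is correct, and its interior half coincides with the paper's argument: both apply the Bernstein technique to $w=|\nabla u|^{2}$, compute $\Delta w = 2|D^{2}u|^{2}+2\nabla u\cdot\nabla_{x}f+2f_{u}|\nabla u|^{2}$, drop the Hessian term at an interior maximum, and exploit the decisive sign $f_{u}=\varepsilon^{-2}\bigl(e^{u}+(x^{2}+y^{2})e^{-u}\bigr)>0$ together with the $C^{0}$ bound of Lemma \ref{lemma1} to conclude $|\nabla u|\leqslant C_{0}/c_{0}$ at that point. Where you genuinely diverge is the boundary case, and there your version is the stronger one. The paper handles it in a single line, asserting that a boundary maximum of $|\nabla u|^{2}$ gives $\sup|\nabla u|=\sup|\nabla g|$; this is not justified as stated, since the boundary data $g$ only controls the tangential derivatives of $u$ along $\partial\Omega$, while the normal derivative $\partial_{\nu}u$ is a priori free. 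Your barrier construction --- comparing $u-G$ with $\pm K\psi$ on exterior tangent spheres, using $|\Delta u|\leqslant B$ from the $C^{0}$ bound --- supplies exactly the missing boundary gradient estimate, and your alternative route via global $W^{2,p}$ estimates and the embedding $W^{2,p}(\Omega)\hookrightarrow C^{1,1-2/p}(\bar\Omega)$ does the same with less geometry. The trade-off: the paper's proof is shorter but leaves a real gap at the boundary; yours costs the extra barrier (or $L^{p}$-theory) machinery but is complete, and either of your two closing arguments would be a legitimate repair of the paper's proof.
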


\begin{proof}
From  the equation (\ref{fu2})  and by standard regularity, one can see $u$ is $C^4$  since $u$ and $g$ are $C^2$.   Then we have
\begin{align}\label{gest}
\Delta|\nabla u|^2&=<\nabla \Delta u, \nabla u>+|\nabla^2u|^2\\ \nonumber
&=\varepsilon^{-2}(e^u+r^2e^{-u})|\nabla u|^2-\varepsilon^{-2}e^{-u}<\nabla r^2,\nabla u>+|\nabla^2 u|^2.
\end{align}

If $|\nabla u|^2$ attains its maximum on the boundary $\partial \Omega$,  we have $\sup |\nabla u|=\sup |\nabla g|$ which leads to (\ref{ugs}).  Now assume $|\nabla u|^2$ attains its maximum at $z_0\in \Omega$. Then from (\ref{gest}), at the point $z_0$  we have
$$\varepsilon^{-2}(e^u+r^2e^{-u})|\nabla u|^2-\varepsilon^{-2}e^{-u}<\nabla r^2,\nabla u>\leqslant 0$$
or
$$(e^u+r^2e^{-u})|\nabla u|^2\leqslant e^{-u} |\nabla r^2| |\nabla u|.$$
Since $|u|$ is bounded from Lemma \ref{lemma1},  there is a constant $C$ dependent on $\Omega$ and $g$ such that
$$ |\nabla u|(z_0)\leqslant C. $$
Then we finish the proof.

\end{proof}

Now we give the proof of Theorem \ref{ThmE}.

\begin{proof}
Let $\sigma\in [0, 1]$, we claim if $u_\sigma\in C^{2,\alpha}(\Omega)$ is the solution of  boundary value problem
\begin{equation}\label{pfs}
\left\{
\begin{aligned}
\Delta u &=\sigma\varepsilon^{-2}\left(e^{u}-\left(x^{2}+y^{2}\right) e^{-u}\right) &\text { in } \Omega,\\
u&=\sigma g  &\text { on } \partial \Omega,
\end{aligned}
\right.
\end{equation}
then there is a constant $M$ independent of $u_\sigma$ and $\sigma$ such that
\begin{equation}\label{claim}
||u_\sigma||_{C^{1,\alpha}(\bar{\Omega})}\leqslant M.
\end{equation}
Then one can use the Leray-Schauder existence theorem (see Theorem 6.23 in \cite{HL})  to show the Dirichlet problem (\ref{fu2}) is solvable in $C^{2,\alpha}(\bar{\Omega})$.

In fact,  one can see $\sigma\Phi$ solves
\begin{equation}\label{phs}
\left\{
\begin{aligned}
\Delta \Phi&=\sigma\varepsilon^{-2} (1-(x^{2}+y^{2}))&\text { in } \Omega,\\
\Phi&=\sigma g  &\text { on } \partial \Omega.
\end{aligned}
\right.
\end{equation}
Then from Lemma \ref{lemma1} , we have
\begin{equation}\label{phs2}
||u_\sigma||_{C^{0}(\bar{\Omega})}\leqslant \sup_\Omega 2|\sigma\Phi |\leqslant \sup_\Omega 2|\Phi|.
\end{equation}

Therefore, from (\ref{pfs}), there is a constant C  independent  on $\sigma$ and $u_\sigma$, such that
 $$|\Delta u|\leqslant C.$$
 This means $|\nabla^2 u|$ is also bounded. From Lemma \ref{lemma2} and  using interpolation inequality in H\"older space, there is a constant $M$ independent on $u$ and $\sigma$ such that (\ref{claim}) is satisfied.

 In the end, by standard bootstrap argument of the regularity we have $u$ is smooth if $\Omega$ and   $g$ are smooth. This finishes the proof.

\end{proof}

\section{Radial symmetry of the global $C^{2}$ solution of the equation in $\mathbb{R}^{2}$}
In this section we will prove Theorem \ref{ThmS}.  In \cite{GNN} and \cite{C}, the radially symmetry of the $C^{2}$ positive solutions of the following second order elliptic equation is studied$$\Delta u+f(u)=0  \text { in } \mathbb{R}^{n} $$
under the assumption on $f$ and $u$. For example, they assumed $u(x)\to 0 $ as $x\to \infty$. Obviously, our equation (\ref{fu2}) is different from this type since $e^{u}-r^2e^{-u}$ has the term $r^2$.  Also we cannot assume $|u|\to 0$ as $r\to +\infty$. In fact, it will lead  $\Delta u\to -\infty$  and then  $u$ is unbounded.  It contradicts the hypothesis.  In this paper,  we assume  for any finite constant $c$
 \begin{equation}\label{asp}
 u(s)-u(t) \rightarrow 0\quad \text{and}\quad |s|-|t|=c, \quad\text{ as}\quad |s|,|t| \rightarrow \infty
 \end{equation}
where $s,t\in \mathbb{R} ^2$.

\begin{proof} \textbf{Proof of Theorem\ref{ThmS}}
Since the partial differential equation (\ref{fu3}) is rotationally symmetric, we only have to prove the symmetry about a line across origin. Here we choose the line $y$ axis.
Define $$\Sigma(\lambda)=\left\{\left(x, y\right) \in \mathbb{R}^{2} \mid x<\lambda\right\}$$ and let $$v=u(2\lambda-x,y),\ x^{\lambda}=2\lambda-x.$$

In $\Sigma(\lambda)$ we define $$w=v(x)-u(x).$$
When $\lambda = 0$ and $x\in \Sigma(\lambda)$,  we have  $x+x^\lambda = 0$ and $x<x^\lambda$. Then $$x^2= (x^\lambda)^2$$
and
\begin{equation}\label{b}
\Delta v -\varepsilon^{-2}\left(e^{v}-\left(({x^{\lambda}})^{2}+y^{2}\right) e^{-v}\right)=\Delta v -\varepsilon^{-2}\left(e^{v}-\left({x}^{2}+y^{2}\right) e^{-v}\right)= 0.
\end{equation}
By the mean value theorem, we have
$$\Delta w+\bar{c} w =  0$$
 where
$$\bar{c}=-\int_{0}^{1}  \varepsilon ^{-2} (e^{ u+t w}+r^2e^{-u-wt} )d t<0.$$
   Then from the assumption (\ref{asp}) and $w(0,0)=0$ on the $y$ axis, we have by maximum principle and minimum principle
$$ w=0$$
in $\Sigma (0)$.  That's to say the global solution of (\ref{fu3}) in $\mathbb{R}^{2}$ is symmetric about $y$ axis.

In the end,  assuming $\lambda > 0$ and $x\in \Sigma(\lambda)$,  then we have   $x+x^\lambda > 0$ and $x<x^\lambda$.  It follows $$x^2< (x^\lambda)^2$$
which implies
\begin{equation}\label{d1}
\Delta v -\varepsilon^{-2}\left(e^{v}-\left({x}^{2}+y^{2}\right) e^{-v}\right)< 0.
\end{equation}
Then by mean value theorem, in $\Sigma(\lambda)$
$$\Delta w+\bar{c} w <  0$$
with $c<0$. Using the infinite boundary condition (\ref{asp}) and $w(\lambda,\lambda)=0$, we have by maximum principle, in $\Sigma(\lambda)$
$$w\geqslant 0.$$

Then if $x>0$ and let $x_\lambda \to x$,  we have $\frac{\partial u}{\partial x}\geqslant 0$. Since $u$ is radially symmetric and  from
$$\frac{\partial u}{\partial x}=\frac{\partial u}{ \partial r} \frac{\partial r}{\partial x}=\frac{\partial u}{\partial r} \frac{x}{r}$$
it follows $\frac{\partial u}{ \partial r} \geqslant0 $ and we finish the proof.

\end{proof}

\end{document}